\documentclass[11pt]{article}
\usepackage[arrow, matrix]{xy}
\usepackage{amsmath}
\usepackage{cancel}
\usepackage{amssymb}
\usepackage{amsthm}
\usepackage[mathscr]{eucal}
\input{epsf}
\theoremstyle{definition}
\newtheorem{definition}{Definition}
\newtheorem{theorem}[definition]{Theorem}
\newtheorem{proposition}[definition]{Proposition}
\newtheorem{lemma}[definition]{Lemma}
\newtheorem{corollary}[definition]{Corollary}
\theoremstyle{remark}

\newcounter{enumctr}

%
%

\newcommand{\R}{\mathbb{R}}

\newcommand{\mP}{\mathbb{P}}

\newcommand{\cF}{\mathcal{F}}

\renewcommand{\phi}{\varphi}

\begin{document}
\title{\vspace*{-10mm}
An analytical proof for synchronization of
stochastic phase oscillator}
\author{
Yuzuru Sato\footnote{Email:  ysato@math.sci.hokudai.ac.jp, RIES / Department of Mathematics, Hokkaido University, Kita 20 Nishi 10, Kita-ku, Sapporo Hokkaido 001-0020, Japan}, Doan Thai Son\footnote{Email: dtson@math.ac.vn, Institute of Mathematics, Vietnam Academy of Science and Technology, 18 Hoang Quoc Viet Road, Cay Giay, Ha Noi, Vietnam},\;
Nguyen Thi The\footnote{Email: thent@vinhuni.edu.vn, Vinh University, 182 Le Duan, Vinh city, Nghe An province, Vietnam}, \; and Hoang The Tuan\footnote{Email: httuan@math.ac.vn, Institute of Mathematics, Vietnam Academy of Science and Technology, 18 Hoang Quoc Viet Road, Cay Giay, Ha Noi, Vietnam}
}
\date{}
\maketitle
\begin{abstract}
In this paper, we show that under a generic condition of the coefficient of a stochastic phase oscillator the Lyapunov exponent of the linearization along an arbitrary solution is always negative. Consequently, the generated random dynamical system exhibits a synchronization.
\end{abstract}
\section{Introduction}
Synchronization of oscillators under the presence of noise has been studied in the literature of Kuramoto theory of coupled limit cycles \cite{Kuramoto}. The noise-induced synchronization is first studied for pulse-noised oscillators whose phase response function is sinusoidal \cite{pikovsky84}. The phase reduction of limit cycle oscillators coupled through common white Gaussean noise is proposed as a model of reliability of dynamics of spiking neurons, and its noise-induced phase synchronization is studied \cite{TT_04}. The phase reduction for stochastic oscillators are verified in the case of random telegraphic and pulse noise \cite{nagai05, nakao05}.

When noise is strong, desynchronization can be observed in stochastic coupled limit cycles \cite{goldobin05,goldobin05-2}. Synchronization with colored noise is also studied \cite{teramae06,kurebayashi12}. After that, the phase reduction of stochastic differential equation with limit cycles has been discussed \cite{nakao07, yoshimura08, teramae09, goldobin10} and it has been confirmed that for a stochastic phase oscillator involving small noise intensity, the Lyapunov exponent is negative. These results are mainly based on the uniformity of the invariant density.

In this paper, starting with phase oscillator equations, without the assumption of small noise intensity, and for almost arbitrary phase response functions, we are still able to show that the negativity of Lyapunov exponent of uncoupled limit cycle oscillator with common noise.
Our result implies that we always have synchronization in uncoupled limit cycles
with common noise if stochastic phase reduction is possible.

The paper is organized as follows: Section \ref{Section2} consists of two parts. In the first part, we introduce a model of stochastic phase oscillator and some fundamental concepts. The second part is devoted to state the main result of the paper. The proof of the main result is given in Section \ref{Section3}.
\section{Preliminaries and main results}\label{Section2}
\subsection{Stochastic phase oscillator}\label{Subsection2.1}
Consider the following model of Stratonovich stochastic phase oscillator
\begin{equation}\label{Circle}
d \phi_t= \rho\;dt+ f(\phi_t) \circ dW_t\qquad\mbox{mod } 2\pi,
\end{equation}
where $\rho>0$ and $f:[0,2\pi]\rightarrow \R$ is a smooth $2\pi$-periodic function.  An equivalent equation of \eqref{Circle} in Ito stochastic differential equation is given by
\begin{equation}\label{Circle02}
d\phi_t= \left(\rho+ \frac{1}{2}f^{'}(\phi_t) f(\phi_t)\right)\,dt+ f(\phi_t)\,dW_t.
\end{equation}
To study the sample-path properties of solutions of \eqref{Circle}, it is convenient to work with the canonical space of Brownian motion, see e.g. \cite[Appendix A2]{Arnold}. Precisely, let $\Omega=C_0(\R_{\geq 0},\R)$ denote the set of all continuous functions $\omega:\R_{\geq 0}\rightarrow \R$ satisfying that $\omega(0)=0$. Let $\Omega$ be equipped with the following metric
\[
\kappa(\omega,\widehat\omega)
:=
\sum_{n=1}^{\infty}
\frac{1}{2^n}\frac{\|\omega-\widehat\omega\|_n}{1+\|\omega-\widehat\omega\|_n},\qquad \|\omega-\widehat\omega\|_n:=\sup_{t\leq 0\leq n} |\omega(t)-\widehat\omega(t)|.
\]
Denote by
   $\mathcal{F}$  the Borel
$\sigma $-algebra of  $(\Omega,\kappa)$ and $\mP$ the Wiener measure  on $(\Omega,\mathcal F) $ such that $W_t(\omega) := \omega(t)$ is a Brownian motion on $(\Omega,\cF,\mP)$.  Define the shift transformation $\theta_t: \Omega  \rightarrow  \Omega$, where $t \in \R$, by
$$
\theta_t\omega(s):=\omega(t+s)-\omega(t)\qquad\hbox{for all } s \in \R_{\geq 0}.
$$
It is well known that the solutions of \eqref{Circle} give rise to a random dynamical system $\phi:\R_{\geq 0}\times \Omega\times [0,2\pi]\rightarrow [0,2\pi]$ over the metric dynamical system $(\Omega,\cF,\mP,(\theta_t)_{t\in\R})$, i.e.
\[
\phi(t+s,\omega,x)=\phi(t,\theta_s\omega,\phi(s,\omega,x))\quad \hbox{for } t,s\in\R_{\geq 0},
\]
see \cite[Theorem 2.3.32]{Arnold}. Note that
 $$
 \left(\rho+ \frac{1}{2}f^{'}(\phi) f(\phi)\right)^2 + f(\phi)^2  \neq 0\qquad\hbox{for all } \phi \in [0,2\pi],
 $$
which indicates that  the coefficients of \eqref{Circle} satisfy the H\"{o}mander condition. Consequently, there exists a unique smooth stationary distribution $\rho$ for the one-point motion $\phi(t,\omega)$. Furthermore, the density $p_{\rm st} $ of $\rho$ satisfies the following Fokker-Planck equation
 \begin{equation}\label{FK_Eq}
	\frac{\partial p(t,\phi)}{\partial t}
	=
	-\frac{\partial }{\partial \phi}\left((\rho+\frac{1}{2}f^{'}(\phi)f(\phi)) p(t,\phi)\right)
	+
	\frac{1}{2}
	\frac{\partial^2 }{\partial \phi^2}\left(f(\phi)^2 p(t,\phi)\right).
 \end{equation}
Note that the skew product flow $(\Theta_t)_{t\in\R}$ on $\Omega\times [0,2\pi] \rightarrow \Omega\times [0,2\pi]$ defined by
\[
\Theta_t(\omega,x):=(\theta_t\omega,\phi(t,\omega,x))
\]
is an ergodic flow preserving the product probability  $\mP\times \rho$. Then, by using Birkhoff's erogidic theorem, the Lyapunov exponent of the linearization of $\phi(t,\omega,x)$
\begin{equation}\label{LE}
\lambda:=\lim_{t\to\infty}\log|D\phi(t,\omega,x)|
\end{equation}
for $\mP\times \rho-(\omega,x)$ exists and is constant.
\subsection{Statements of the main results}\label{Subsection2.2}
In the remaining of the paper, the function $f$ is assumed to fulfill the following generic property:
\begin{itemize}
\item [(H1)] The function $f$ is not a constant function.
\item [(H2)] The graph of the function $f$ and the zero function intersect transversally, i.e. $f^{'}(\phi)\not=0$ for all $\phi\in \mathcal N(f)$, where
\begin{equation}\label{Nullset}
\mathcal N(f):=\{\phi\in [0,2\pi]: f(\phi)=0\}.
\end{equation}
\end{itemize}
The first part of the main result of this paper indicates that the Lyapunov exponent of the linearization of the random dynamical system generated by \eqref{Circle} is negative.
\begin{theorem}[Negativity of Lyapunov exponents of generic stochastic phase oscillators]\label{MainThm1} Suppose that the assumptions (H1) and (H2)  hold. Then, the Lyapunov exponent of the linearization of the random dynamical system $\phi$ generated by \eqref{Circle} defined as in \eqref{LE} is strictly negative.
\end{theorem}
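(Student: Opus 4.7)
The plan is to obtain a closed expression for $\lambda$ via the Khasminskii--Furstenberg formula, rewrite the result using the stationary Fokker--Planck equation, and then extract strict negativity from (H1) and (H2). Differentiating \eqref{Circle} in the initial condition yields $dv_t = f'(\phi_t)\,v_t\circ dW_t$ for the tangent cocycle $v_t:=D\phi(t,\omega,x)$, so $d\log v_t = f'(\phi_t)\circ dW_t$. Converting to It\^o gives $d\log v_t = f'(\phi_t)\,dW_t + \tfrac12 f''(\phi_t)f(\phi_t)\,dt$; the strong law of large numbers for the It\^o martingale, together with Birkhoff's theorem applied to $p_{\rm st}$, produces
\[
\lambda \;=\; \tfrac12\int_0^{2\pi} f(\phi)\,f''(\phi)\,p_{\rm st}(\phi)\,d\phi.
\]
Using $f''f = (ff')' - (f')^2$ and integrating by parts (periodic boundary), I rewrite this as $\lambda = -\tfrac12\!\int (f')^2 p_{\rm st}\,d\phi - \tfrac12\!\int ff' p_{\rm st}'\,d\phi$. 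The first summand is $\leq 0$, and \emph{strictly} negative under (H1) because $p_{\rm st}>0$ everywhere by the H\"ormander condition noted just before the theorem. The strategy reduces to controlling the second summand.

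For this I would exploit the stationary form of \eqref{FK_Eq}: integrating once yields the existence of a constant probability current $J$ with $\tfrac12 f(\phi)(fp_{\rm st})'(\phi) = \rho p_{\rm st}(\phi) - J$. At each $\phi_0\in\mathcal N(f)$ this forces $p_{\rm st}(\phi_0)=J/\rho$, and hypothesis (H2) makes the apparent singularity of $(fp_{\rm st})' = 2(\rho p_{\rm st}-J)/f$ at $\phi_0$ removable (since $f'(\phi_0)\neq 0$ implies $\rho p_{\rm st}-J$ vanishes to the same linear order as $f$ there), so every integral below is absolutely convergent. Substituting the identity $ff'p_{\rm st}' = f'(fp_{\rm st})' - (f')^2 p_{\rm st}$ into the expression for $\lambda$ cancels two $(f')^2$ terms and leaves the compact formula
\[
\lambda \;=\; -\tfrac12\int_0^{2\pi} f'(\phi)\,(fp_{\rm st})'(\phi)\,d\phi \;=\; -\rho \int_0^{2\pi} \frac{f'(\phi)\,[p_{\rm st}(\phi)-J/\rho]}{f(\phi)}\,d\phi.
\]

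The main obstacle is establishing a strict sign for this last integral. My primary route is a rearrangement/Chebyshev-type argument: on any interval where $f$ does not vanish, the substitution $\psi=\int^\phi du/f(u)$ turns \eqref{Circle} into an additive-noise SDE $d\psi = (\rho/f)\,dt + dW$, whose stationary density $\tilde p_{\rm st}(\psi) = f(\phi)p_{\rm st}(\phi)$ solves an explicit first-order linear ODE; from this representation one reads off the anti-correlation between $p_{\rm st}$ and $|f|$ (large $|f|$ means strong diffusion and hence small dwell-time, so $p_{\rm st}$ is small there), and a Chebyshev inequality then forces the displayed integral to be positive, strictly so under (H1). As a geometric back-up, Jensen applied to the circle diffeomorphism $\phi(t,\omega,\cdot)$ gives $\int_0^{2\pi}\log D\phi(t,\omega,x)\,dx \leq 0$ with equality only for a rigid rotation $\phi_t(\omega,x) = x+c(t,\omega)$; such a rotation is ruled out by (H1), since substituting into \eqref{Circle} would force $f(x+c)=f(c)$ for every $x$. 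Turning that pointwise strict inequality into a strict bound on the Birkhoff time-average of $t^{-1}\log D\phi_t$ is the subtle step, and I expect it to be where the actual novelty of the paper concentrates.
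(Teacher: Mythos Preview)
Your derivation of the Furstenberg--Khasminskii formula and the reduction to $\lambda=-\rho\int_0^{2\pi}\frac{f'(\phi)}{f(\phi)}\bigl(p_{\rm st}(\phi)-J/\rho\bigr)\,d\phi$ matches the paper exactly (the paper writes $C=-2J$). But your proposal stops short of proving the strict sign: the ``Chebyshev/anti-correlation'' route is only a heuristic---neither $p_{\rm st}$ nor $|f|$ need be monotone, so no rearrangement inequality applies directly, and the substitution $\psi=\int d\phi/f$ does not produce a simple Gibbs density because the circle carries a nonzero probability current. The Jensen argument you sketch yields only $\lambda\le 0$, and you yourself flag the passage to strict inequality as the missing step. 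So the proof as written has a genuine gap at precisely the point that matters.

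The paper's device is different and entirely elementary. After obtaining (in your notation) $-\lambda/\rho = 2\rho\int p_{\rm st}/f^{2}\,d\phi + C\int 1/f^{2}\,d\phi$ by integrating the stationary Fokker--Planck equation once, it integrates the \emph{same} equation divided by $p_{\rm st}$, producing the second identity $2\rho\int 1/f^{2}\,d\phi + C\int 1/(f^{2} p_{\rm st})\,d\phi=0$. Eliminating $C$ between these two relations and applying Cauchy--Schwarz with weight $1/f^{2}$ gives $\lambda<0$; equivalently, adding $C/(2\rho)$ times the second identity to the first yields the perfect-square representation
\[
-\frac{\lambda}{\rho}\;=\;\int_0^{2\pi}\frac{1}{f(\phi)^{2}}\left(\sqrt{2\rho\,p_{\rm st}(\phi)}+\frac{C}{\sqrt{2\rho\,p_{\rm st}(\phi)}}\right)^{2}d\phi,
\]
which is nonnegative and vanishes only if $p_{\rm st}\equiv -C/(2\rho)$ is constant---a possibility the paper rules out from (H1) via a short lemma. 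When $\mathcal N(f)\neq\emptyset$ the integrals $\int 1/f^{2}$ and $\int 1/(f^{2}p_{\rm st})$ diverge individually, so the paper works on each interval $(\phi_i,\phi_{i+1})$ between consecutive zeros, truncating to $(\phi_i+\delta,\,\phi_{i+1}-r(\delta))$ with $r$ chosen so that $f(\phi_i+\delta)=f(\phi_{i+1}-r(\delta))$; this makes the boundary contributions $\ln|f|$ and $\ln p_{\rm st}$ cancel in the limit $\delta\to 0^+$ and recovers the same perfect-square formula on each subinterval. The role of (H2) in the paper is thus to guarantee finitely many zeros and the existence of this matching function $r$, not merely the integrability you invoke.
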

Next, we formulate in the following corollary a result on  synchronization of the generated random dynamical systems from several point of views.
\begin{corollary}[Synchronization of generic stochastic phase oscillators]
Suppose that the assumptions (H1) and (H2) hold. Then, the following statements hold:
\begin{itemize}
\item [(i)] For any $x,y\in [0,2\pi]$ we have
\[
\lim_{t\to\infty} d(\phi(t,\omega,x),\phi(t,\omega,y))=0\qquad\hbox{for } \mP-\hbox{a.e. } \omega\in\Omega.
\]
\item [(ii)] There exist a random fixed point $a:\Omega\rightarrow [0,2\pi]$ such that for all $x\in [0,2\pi]$
\[
\lim_{t\to\infty} d(\phi(t,\theta_{-t}\omega,x), a(\omega))=0\qquad\hbox{for } \mP-\hbox{a.e. } \omega\in\Omega.
\]
\end{itemize}
\end{corollary}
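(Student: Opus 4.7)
Both statements rest on the strict negativity of $\lambda$ provided by Theorem~\ref{MainThm1}, together with the compactness of $[0,2\pi]$ and the existence of a smooth positive invariant density $p_{\rm st}$ (which guarantees in particular that the set of Lyapunov regular points has full Lebesgue measure). My plan is to prove (ii) first by a pullback construction of the random fixed point, and then handle (i) via a two-point Markov process argument.

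For (ii), I would fix $x_0\in[0,2\pi]$, set $a_n(\omega):=\phi(n,\theta_{-n}\omega,x_0)$, and show $(a_n(\omega))_{n\ge 0}$ is almost surely Cauchy. By the cocycle identity, $a_{n+k}(\omega)=\phi(n,\theta_{-n}\omega,y_k)$ with $y_k:=\phi(k,\theta_{-(n+k)}\omega,x_0)$; lifting to $\R$ and applying the mean value theorem bounds
\[
d(a_{n+k}(\omega),a_n(\omega))\;\le\;\Bigl|\int_{x_0}^{y_k} |D\phi(n,\theta_{-n}\omega,z)|\, dz\Bigr|.
\]
By Theorem~\ref{MainThm1} and Birkhoff's theorem, $\frac{1}{n}\log|D\phi(n,\omega,z)|\to\lambda<0$ for $\mP\times\rho$-a.e.\ $(\omega,z)$, and this transfers to Lebesgue-a.e.\ $z$ by positivity of $p_{\rm st}$. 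Once the Cauchy property is established, $a(\omega):=\lim_n a_n(\omega)$ inherits the equivariance $\phi(t,\omega,a(\omega))=a(\theta_t\omega)$ directly from the cocycle identity.

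For (i), I would analyze the two-point process $Z_t(\omega):=(\phi(t,\omega,x),\phi(t,\omega,y))$ on $[0,2\pi]^2$, whose diagonal $\Delta=\{(u,u)\}$ is invariant (both components are driven by the same noise $W_t$). The negativity of $\lambda$ yields local exponential contraction of $d(Z_t)$ near $\Delta$, while the H\"ormander condition satisfied by \eqref{Circle} together with the compactness of $[0,2\pi]^2$ gives Doeblin-type recurrence of $Z_t$ to arbitrary neighborhoods of $\Delta$. A standard stopping-time / Borel--Cantelli argument then combines these two ingredients to produce $d(Z_t)\to 0$ almost surely.

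The principal obstacle lies in the Cauchy step of (ii). Because $\phi(n,\omega,\cdot)$ is an orientation-preserving circle diffeomorphism, $\int_0^{2\pi}|D\phi(n,\omega,z)|\,dz=2\pi$ for every $n$ and $\omega$, so $|D\phi|$ cannot decay uniformly in $z$ and na\"ive dominated convergence is unavailable. Controlling $\int_{x_0}^{y_k}|D\phi(n,\theta_{-n}\omega,z)|\,dz$ therefore requires showing that the mass of $|D\phi|$ concentrates on a shrinking random set and that $y_k$ stays in the complementary (contracting) region; this interplay between the expanding and contracting parts of the diffeomorphism has to be resolved self-consistently with the construction of $a(\omega)$.
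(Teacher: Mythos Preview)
The paper does not prove this corollary at all: its entire proof reads ``See \cite{Baxendale}.'' Theorem~\ref{MainThm1} supplies the negative Lyapunov exponent, and the passage from $\lambda<0$ to synchronization on a compact manifold is deferred wholesale to Baxendale's general result on statistical equilibrium for stochastic flows. Your proposal therefore takes a genuinely different route --- you are attempting to reprove (a special case of) Baxendale's theorem from scratch.

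On the substance of your sketch, you have correctly flagged the serious obstruction in part~(ii): the constraint $\int_0^{2\pi}|D\phi(n,\omega,z)|\,dz=2\pi$ forces $|D\phi|$ to be large somewhere, so the mean-value bound on $d(a_{n+k},a_n)$ cannot be closed without already knowing where the expanding set sits relative to $y_k$ --- and that is essentially equivalent to what you are trying to prove. There is a second gap you did not flag, in part~(i): the H\"ormander condition you invoke is the \emph{one-point} condition for \eqref{Circle}, and it does not transfer to the two-point motion $Z_t$ on $[0,2\pi]^2\setminus\Delta$. The two-point diffusion has drift $\rho(\partial_1+\partial_2)$ and a single noise vector field $f(\phi_1)\partial_1+f(\phi_2)\partial_2$; the Lie brackets fail to span at any off-diagonal point where $f(\phi_1)f'(\phi_2)=f(\phi_2)f'(\phi_1)$, so you do not get hypoellipticity or a Doeblin condition for $Z_t$ automatically. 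Establishing that $Z_t$ nonetheless approaches $\Delta$ is precisely the content of Baxendale's paper (it uses the negative exponent to show the two-point motion off the diagonal is transient, via a Lyapunov function built from the stable-manifold structure), and is not a routine stopping-time argument. If your goal is a self-contained proof, you will need to supply this piece; otherwise, citing \cite{Baxendale} as the paper does is the honest option.
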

\begin{proof}
See \cite{Baxendale}.
\end{proof}
\section{Proof of the main result}\label{Section3}
Before going to the proof of the main result, we need the following preparatory materials. Note that the second part of the following proposition is named as Furtensberg-Khaminskii formula for the Lyapunov exponent of nonlinear stochastic differential equations, see \cite[Subsection 6.2.2]{Arnold}. To make the paper self-contained, we give a short proof of this formula.
\begin{proposition}\label{Fundamental}
	\begin{itemize}
		\item [(i)]		
		The density $p_{\rm st}(\phi)$ of the unique stationary measure of the Markov-process generated by \eqref{Circle02} satisfies the following differential equation
		\begin{equation}\label{FK}
		f(\phi)^2 \;p_{\rm st}^{'}(\phi)= \left(
		2\rho- f^{'}(\phi)f(\phi)
		\right) p_{\rm st}(\phi)+
		C,
		\end{equation}
		where $C$ is a constant.
		\item[(ii)] The Lyapunov exponent of the linearization of  random dynamical system generated by \eqref{Circle02} is given by
		\[
		\lambda=\frac{1}{2}\int_0^{2\pi} f^{''}(\phi) f(\phi) p_{\rm st}(\phi)\;d\phi.
		\]
	\end{itemize}
\end{proposition}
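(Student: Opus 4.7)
The plan is to treat the two parts separately: part (i) will follow directly from the stationary Fokker--Planck equation, while part (ii) will be obtained by an It\^o computation for $\log|D\phi_t|$ combined with Birkhoff's ergodic theorem applied to the skew product $(\Theta_t)$.

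For part (i), I would set $\partial p/\partial t=0$ in \eqref{FK_Eq}, so that the stationary probability current
\[
J(\phi) \;=\; \bigl(\rho+\tfrac12 f'(\phi) f(\phi)\bigr) p_{\rm st}(\phi) - \tfrac12\,\frac{d}{d\phi}\bigl(f(\phi)^2 p_{\rm st}(\phi)\bigr)
\]
has vanishing derivative on $[0,2\pi]$, hence is a constant. Expanding $(f^2 p_{\rm st})' = 2 f f' p_{\rm st} + f^2 p_{\rm st}'$ and collecting terms yields exactly \eqref{FK}, with the constant $C$ equal to $-2J$.

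For part (ii), I would first derive an SDE for the linearization $D\phi_t$ by formally differentiating the It\^o form \eqref{Circle02} in the initial condition, obtaining
\[
d(D\phi_t) \;=\; \tfrac12\bigl(f''(\phi_t) f(\phi_t) + f'(\phi_t)^2\bigr)\, D\phi_t\, dt + f'(\phi_t)\, D\phi_t\, dW_t.
\]
Applying It\^o's formula to $\log|D\phi_t|$, the $f'(\phi_t)^2\,dt$ drift is exactly cancelled by the quadratic-variation correction $-\tfrac12 f'(\phi_t)^2\,dt$, leaving
\[
\log|D\phi(t,\omega,x)| \;=\; \tfrac12\int_0^t f''(\phi_s) f(\phi_s)\, ds \;+\; \int_0^t f'(\phi_s)\, dW_s.
\]
Dividing by $t$ and letting $t\to\infty$, the first integral is a Birkhoff average for the observable $\tfrac12 f'' f$ along the ergodic flow $(\Theta_t)$ with invariant measure $\mP\times \rho$, and so converges almost surely to $\tfrac12\int_0^{2\pi} f''(\phi) f(\phi) p_{\rm st}(\phi)\, d\phi$. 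The stochastic integral is a continuous martingale whose quadratic variation $\int_0^t f'(\phi_s)^2\,ds$ grows at most linearly because $f'$ is bounded on the compact circle, so the martingale strong law of large numbers gives $t^{-1}\int_0^t f'(\phi_s)\, dW_s \to 0$ almost surely.

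The algebraically delicate step is the It\^o--Stratonovich bookkeeping in the linearization, where the $f'(\phi_t)^2$ term from the It\^o form of the drift must be tracked carefully so that the cancellation with the quadratic-variation correction produces the clean integrand $\tfrac12 f'' f$. Conceptually, the only non-routine ingredient is the vanishing of the martingale term, which here is immediate from the boundedness of $f'$; this is the one point at which the compactness of the phase space $[0,2\pi]$ is really used.
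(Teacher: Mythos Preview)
Your proposal is correct and follows essentially the same approach as the paper. The only cosmetic difference is in part (ii): the paper linearizes the Stratonovich form \eqref{Circle} directly to get $dv_t = f'(\phi_t)\,v_t\circ dW_t$ and then converts $d\log|v_t|$ to It\^o, whereas you linearize the It\^o form \eqref{Circle02} and track the cancellation of the $f'(\phi_t)^2$ terms explicitly; both routes yield the same integral representation, and your justification of the vanishing martingale term via boundedness of $f'$ is exactly what the paper uses implicitly.
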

\begin{proof}
	(i) Using \eqref{FK_Eq}, the density function $p_{\rm st} (\phi)$ of the unique stationary measure of \eqref{Circle02} satisfies the following equality
	\[
	-\frac{\partial }{\partial \phi}\left((\rho+\frac{1}{2}f^{'}(\phi)f(\phi)) p_{\rm st}(\phi)\right)
	+
	\frac{1}{2}
	\frac{\partial^2 }{\partial \phi^2}\left(f(\phi)^2 p_{\rm st}(\phi)\right)=0.
	\]
	Consequently, there exists a constant $C$ such that
	\[
	-\left(2\rho+f^{'}(\phi)f(\phi)\right) p_{\rm st}(\phi)+
	\frac{\partial }{\partial \phi}\left(f(\phi)^2 p_{\rm st}(\phi)\right)=C.
	\]
Expanding the term $\frac{\partial }{\partial \phi}\left(f(\phi)^2 p_{\rm st}(\phi)\right)$ in the preceding equality completes the proof of this part.

	\noindent
	(ii) The linearization along a fixed solution $\phi_t$ of \eqref{Circle} is given by
	\[
	dv_t= f^{'}(\phi_t)v_t\circ dW_t.
	\]
	Define $r_t:=\log|v_t|$. Then, the equation for $r_t$ is given by
	\[
	dr_t= f^{'}(\phi_t)\circ dW_t.
	\]
	The Ito form of this equation, see e.g. \cite[pp. 137-138]{Kloeden},  is
	\[
	dr_t= \frac{1}{2}f^{''}(\phi_t)f(\phi_t)\;dt+ f^{'}(\phi_t) dW_t.
	\]
	Therefore, the Lyapunov exponent $\lambda=\lim_{t\to\infty}\frac{1}{t} r_t$ can be computed as follows
	\begin{eqnarray*}
	\lambda
	&=& \frac{1}{2}\lim_{t\to\infty}\frac{1}{t} \int_0^tf^{''}(\phi_s)f(\phi_s)\;ds
	+\lim_{t\to\infty}\frac{1}{t} \int_0^t f^{'}(\phi_s)\,dW_s\\
	&=&
	\frac{1}{2} \int_0^{2\pi} f^{''}(\phi)f(\phi) p_{\rm st}(\phi)\;d\phi.
	\end{eqnarray*}
The proof is complete.
\end{proof}
Using the preceding proposition, we show in the following lemma that under the assumption (H1), the stationary distribution of \eqref{Circle} is not uniform on $[0,2\pi]$.
\begin{lemma}\label{Non-constant}
Suppose that (H1) holds. Then, the density function $p_{\rm st}$ is not constant.
\end{lemma}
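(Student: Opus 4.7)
The plan is to argue by contradiction using the ODE for the stationary density that was established in Proposition \ref{Fundamental}(i). Suppose $p_{\rm st}$ is constant on $[0,2\pi]$; since it is a probability density, this constant equals $\frac{1}{2\pi}$. Substituting $p_{\rm st}(\phi)\equiv \frac{1}{2\pi}$ and $p_{\rm st}'(\phi)\equiv 0$ into \eqref{FK} reduces the equation to
\[
0 = \frac{1}{2\pi}\bigl(2\rho - f'(\phi)f(\phi)\bigr) + C,
\]
which forces $f'(\phi)f(\phi)$ to be a constant on $[0,2\pi]$. Writing this as $\frac{d}{d\phi}\bigl(\tfrac{1}{2}f(\phi)^{2}\bigr) = K$ for some constant $K$, an integration gives $f(\phi)^{2} = 2K\phi + K_0$ for some constant $K_0$.

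Next I would invoke the $2\pi$-periodicity of $f$. Since $f(\phi)^{2}$ is $2\pi$-periodic but the right-hand side grows linearly in $\phi$ unless $K=0$, we must have $K=0$, so $f(\phi)^{2}$ is a constant on $[0,2\pi]$.

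The last step is to upgrade ``$f^{2}$ constant'' to ``$f$ constant.'' If $f^2 \equiv c$ with $c = 0$ then $f \equiv 0$ is constant. If $c > 0$, then $f(\phi)\in\{\pm\sqrt{c}\}$ for every $\phi$; but $f$ is smooth (hence continuous) on the connected set $[0,2\pi]$, so $f$ cannot switch sign without vanishing, and vanishing would violate $f^{2}\equiv c>0$. Hence $f$ is identically $\sqrt{c}$ or identically $-\sqrt{c}$, i.e.\ constant, contradicting (H1). The only subtle point is this connectedness/continuity argument; the rest of the proof is an essentially mechanical consequence of the stationary ODE.
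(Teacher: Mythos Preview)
Your proof is correct and follows essentially the same route as the paper: assume $p_{\rm st}\equiv\frac{1}{2\pi}$, plug into \eqref{FK} to conclude $f'f$ is constant, integrate and use $2\pi$-periodicity to force $f^2$ constant, and then contradict (H1). The only difference is that you spell out the continuity/connectedness argument passing from ``$f^2$ constant'' to ``$f$ constant,'' whereas the paper leaves that step implicit.
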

\begin{proof}
Suppose a contrary, i.e. $p_{\rm st}(\phi)=\frac{1}{2\pi}$ for all $\phi\in [0,2\pi]$. Then, by \eqref{FK} we arrive at
\[
f^{'}(\phi)f(\phi)=2\rho + 2\pi C\qquad\hbox{for } \phi\in [0,2\pi].
\]
Since $(f(\phi)^2)^{'}=2f^{'}(\phi)f(\phi)$ it follows that
\[
f(\phi)^2=f(0)^2 + (\rho+\pi C)\phi,
\]
which together with $2\pi$-periodicity of $f$ implies that $\rho+\pi C=0$. Thus, $f(\phi)^2$ is a constant function and this contradicts to (H1). The proof is complete.
\end{proof}
%
%
Note that by continuity of the function $f$, the set $\mathcal N(f)$ defined as in \eqref{Nullset} is a closed subset of $[0,2\pi]$. Furthermore, the assumption (H2) implies that the set  $\mathcal N(f)$ has no accumulation point. Consequently, the set $\mathcal N(f)$ is either empty or finite.  In what follows, we separate the proof of Theorem \ref{MainThm1} into two cases:
\begin{itemize}
\item \emph{Non-vanishing noise}: The set $\mathcal N(f)$ is empty.
\item \emph{Vanishing noise}: The set $\mathcal N(f)$ is not empty and finite.
\end{itemize}
Before going to the proof of the theorem for the non-vanishing noise, we need the following technical lemma.
\begin{lemma}\label{Lemma1}
Let $g:[0,2\pi]\rightarrow \R_{>0}$ be an arbitrary continuous function. Then,
\begin{equation}\label{Inequality}
\int_0^{2\pi} g(\phi) p_{\rm st}(\phi)\;d\phi
\int_0^{2\pi} \frac{g(\phi)}{p_{\rm st}(\phi)}\;d\phi
> \left(\int_0^{2\pi} g(\phi)\;d\phi\right)^2.
\end{equation}
\end{lemma}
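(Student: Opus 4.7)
The strategy is a strict Cauchy--Schwarz argument, combined with Lemma~\ref{Non-constant} to rule out the equality case. First I would observe that since the Markov process generated by \eqref{Circle02} satisfies the H\"ormander condition, its stationary density $p_{\rm st}$ is smooth; moreover, being the density of a stationary distribution that charges every open set (by controllability on the circle), $p_{\rm st}$ is strictly positive on $[0,2\pi]$. In particular, the factorization
\[
g(\phi) \;=\; \sqrt{g(\phi)\,p_{\rm st}(\phi)}\;\cdot\;\sqrt{\frac{g(\phi)}{p_{\rm st}(\phi)}}
\]
makes sense pointwise for every continuous $g:[0,2\pi]\to\R_{>0}$.

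Next, applying the Cauchy--Schwarz inequality in $L^2([0,2\pi],d\phi)$ to the two factors above yields
\[
\left(\int_0^{2\pi} g(\phi)\,d\phi\right)^2
\;\leq\;
\int_0^{2\pi} g(\phi)\,p_{\rm st}(\phi)\,d\phi
\cdot
\int_0^{2\pi} \frac{g(\phi)}{p_{\rm st}(\phi)}\,d\phi,
\]
which is the weak form of \eqref{Inequality}. To promote this to a strict inequality, I would recall that equality in Cauchy--Schwarz holds if and only if the two factors $\sqrt{g\,p_{\rm st}}$ and $\sqrt{g/p_{\rm st}}$ are proportional almost everywhere on $[0,2\pi]$. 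Since $g>0$ everywhere, this proportionality is equivalent to $p_{\rm st}$ being constant on $[0,2\pi]$.

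Finally, Lemma~\ref{Non-constant} (which uses hypothesis (H1)) states exactly that $p_{\rm st}$ is not constant. Hence the equality case is excluded, and the inequality \eqref{Inequality} is strict, completing the proof. The only non-routine point in this plan is justifying that $p_{\rm st}>0$ on all of $[0,2\pi]$ so that the integrals are well-defined; this already follows from the smoothness and non-degeneracy assertions invoked earlier in the paper (after equation \eqref{FK_Eq}), but it is worth stating explicitly at the beginning of the proof.
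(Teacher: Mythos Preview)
Your proposal is correct and follows essentially the same argument as the paper: define $u=\sqrt{g\,p_{\rm st}}$ and $v=\sqrt{g/p_{\rm st}}$, apply Cauchy--Schwarz (the paper says H\"older, but it is the $p=q=2$ case), and invoke Lemma~\ref{Non-constant} to exclude equality. Your additional remark that $p_{\rm st}>0$ is needed for the factorization is a useful explicit observation that the paper leaves implicit.
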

\begin{proof}
Let
\[
u(\phi):=\sqrt{g(\phi) p_{\rm st}(\phi)}\quad\hbox{and } \quad v(\phi):=\sqrt{\frac{g(\phi)}{p_{\rm st}(\phi)}}.
\]
Using the H\"{o}lder inequality, we obtain
\begin{equation}\label{Holder}
\int_0^{2\phi} u(\phi)^2\;d\phi \int_0^{2\phi} v(\phi)^2\;d\phi
\geq
\left(\int_0^{2\pi} u(\phi)v(\phi)\;d\phi\right)^2.
\end{equation}
 Furthermore, the equality holds iff $p_{\rm st}(\cdot)$ is constant and using Lemma \ref{Non-constant}, the equality of \eqref{Holder} cannot hold.  The proof is complete.
\end{proof}
\begin{proof} [Proof of Theorem \ref{MainThm1} for non-vanishing noise] In such a case, $f(\phi)\not=0$ for all $\phi\in [0,2\pi]$. Thus, the Fokker-Planck equation in \eqref{FK} becomes an ordinary differential equation of the form
\begin{equation}\label{FK_02}
p_{\rm st}^{'}(\phi)= \left(
		\frac{2\rho}{f(\phi)^2}- \frac{f^{'}(\phi)}{f(\phi)}
		\right) p_{\rm st}(\phi)+
		\frac{C}{f(\phi)^2},
\end{equation}
which implies that
\begin{equation*}\label{FK_03}
f^{'}(\phi)f(\phi) p_{\rm st}^{'}(\phi)
=
2\rho\frac{f^{'}(\phi)}{f(\phi)} p_{\rm st}(\phi)- f^{'}(\phi)^2p_{\rm st}(\phi)+C \frac{f^{'}(\phi)}{f(\phi)}.
\end{equation*}
Consequently,
\begin{eqnarray*}
\int_0^{2\pi} f^{'}(\phi)f(\phi) p_{\rm st}^{'}(\phi)\;d\phi
&=&
2\rho
\int_0^{2\pi}\frac{f^{'}(\phi)}{f(\phi)}p_{\rm st}(\phi)\;d\phi -\int_{0}^{2\pi} f^{'}(\phi)^2 p_{\rm st}(\phi)\;d\phi\\
&&+ C\int_0^{2\pi} \frac{f^{'}(\phi)}{f(\phi)}\;d\phi.
\end{eqnarray*}
Since $\int_0^{2\pi} \frac{f^{'}(\phi)}{f(\phi)}\;d\phi=\ln f(2\pi)-\ln f(0)=0$ it follows that
\[
\int_0^{2\pi} f^{'}(\phi)f(\phi) p_{\rm st}^{'}(\phi)\;d\phi
+
\int_{0}^{2\pi} f^{'}(\phi)^2 p_{\rm st}(\phi)\;d\phi
=2\rho \int_0^{2\pi}\frac{f^{'}(\phi)}{f(\phi)}p_{\rm st}(\phi)\;d\phi.
\]
Thus, from Proposition \ref{Fundamental}(ii) and the integration by parts formula we arrive at
\begin{eqnarray}
\lambda
&=&
\frac{1}{2}\int_0^{2\pi} f^{''}(\phi) f(\phi) p_{\rm st}(\phi)\;d\phi\notag\\
&=&
-\frac{1}{2}
\left(
\int_0^{2\pi} f^{'}(\phi)f(\phi) p_{\rm st}^{'}(\phi)\;d\phi
+
\int_{0}^{2\pi} f^{'}(\phi)^2 p_{\rm st}(\phi)\;d\phi
\right)\notag
\\
&=&
-\rho \int_0^{2\pi}\frac{f^{'}(\phi)}{f(\phi)}p_{\rm st}(\phi)\;d\phi.
\label{FK_04}
\end{eqnarray}
On the other hand, taking the integral of both sides of \eqref{FK_02} yields that
\[
\int_0^{2\pi}
\frac{f^{'}(\phi)}{f(\phi)}p_{\rm st}(\phi)\;d\phi
=
2\rho\int_0^{2\pi}
\frac{p_{\rm st}(\phi)}{f(\phi)^2}\;d\phi
+
C \int_0^{2\pi} \frac{1}{f(\phi)^2}\;d\phi,
\]
which together with \eqref{FK_04} implies that
\begin{equation}\label{FK_05}
-\frac{\lambda}{\rho}
=
2\rho\int_0^{2\pi}
\frac{p_{\rm st}(\phi)}{f(\phi)^2}\;d\phi
+
C \int_0^{2\pi} \frac{1}{f(\phi)^2}\;d\phi.
\end{equation}
Next, we are applying Lemma \ref{Lemma1} to indicate that $\lambda<0$. For this purpose, we divide both sides of \eqref{FK_02} by the term $p_{\rm st}(\phi)$ to obtain
\[
\frac{p_{\rm st}^{'}(\phi)}{p_{\rm st}(\phi)}
=\frac{2\rho}{f(\phi)^2}-\frac{f^{'}(\phi)}{f(\phi)}+\frac{C}{f(\phi)^2 p_{\rm st}(\phi)}.
\]
Taking the integral of both sides of the preceding equality yields that
\begin{equation}\label{FK_06}
0=\int_0^{2\pi}
\frac{p_{\rm st}^{'}(\phi)}{p_{\rm st}(\phi)}
\;d\phi=
2\rho
\int_0^{2\pi}\frac{1}{f(\phi)^2}\;d\phi
+
C\int_{0}^{2\pi}\frac{1}{f(\phi)^2 p_{\rm st}(\phi)}\;d\phi.
\end{equation}
By virtue of Lemma \ref{Lemma1}, we have
\[
\int_{0}^{2\pi}\frac{1}{f(\phi)^2 p_{\rm st}(\phi)}\;d\phi
\int_{0}^{2\pi}\frac{p_{\rm st}(\phi)}{f(\phi)^2 }\;d\phi
>
\left(\int_0^{2\pi}\frac{1}{f(\phi)^2}\;d\phi\right)^2,
\]
which together with \eqref{FK_06} implies that
\[
\int_{0}^{2\pi}\frac{p_{\rm st}(\phi)}{f(\phi)^2 }\;d\phi
>
\frac{|C|}{2\rho} \int_0^{2\pi}\frac{1}{f(\phi)^2}\;d\phi.
\]
Consequently, from \eqref{FK_05} we derive that $\lambda<0$ and the assertion is proved in this case.
\end{proof}
The remaining of this paper is devoted to prove Theorem \ref{MainThm1} in the vanishing noise case. In comparison to the non-vanishing noise case, the difficulty is that the equation \eqref{FK} is an differential algebraic equation. To overcome this difficulty, we will treat \eqref{FK} on different domains of $\phi$ in which $f(\phi)$ is non-vanishing. For this purpose, we need the following preparatory lemma.
\begin{lemma}\label{Lemma2}
Let $\eta<\zeta$ be two consecutive elements of the set $\mathcal N(f)$. Then, the following statements hold:
\begin{itemize}
\item [(i)] There exists $\delta^*\in (0,\frac{\zeta-\eta}{2})$ and a smooth and strictly increasing function $r:[0,\delta^*)\rightarrow [0,\frac{\zeta-\eta}{2}]$ satisfying that $r(0)=0$ and
\begin{equation}\label{Requirement1}
f(\eta+\delta)=f(\zeta-r(\delta))\qquad\hbox{for all } \delta\in [0,\delta^*).
\end{equation}
\item [(ii)] The density function $p_{\rm st}$ is not constant on the interval $[\eta,\zeta]$.
\end{itemize}
\end{lemma}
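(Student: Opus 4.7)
\textbf{Proof proposal for Lemma \ref{Lemma2}.} For part (i), my plan is to invoke the Implicit Function Theorem. I will set $F(\delta,r) := f(\zeta-r) - f(\eta+\delta)$ on a neighborhood of $(0,0)$, note that $F(0,0)=0$, and observe that by (H2) the partial derivative $\partial_r F(0,0) = -f'(\zeta)$ is nonzero. The IFT will then produce a smooth function $r(\delta)$ with $r(0)=0$ satisfying \eqref{Requirement1}. Implicit differentiation will give
\[
r'(0) = -\frac{f'(\eta)}{f'(\zeta)},
\]
and I will argue this is strictly positive as follows: since $\eta,\zeta$ are consecutive elements of $\mathcal{N}(f)$, the function $f$ keeps a constant sign on $(\eta,\zeta)$; together with the transversality (H2), this forces $f'(\eta)$ and $f'(\zeta)$ to have opposite signs, so $r'(0)>0$. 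Finally, I will shrink $\delta^*$ so that $r$ is strictly increasing on $[0,\delta^*)$ with image contained in $[0,(\zeta-\eta)/2]$.

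For part (ii), I would argue by contradiction. Suppose $p_{\rm st}(\phi)\equiv c$ on $[\eta,\zeta]$. Since \eqref{Circle02} satisfies the H\"ormander condition noted in Subsection \ref{Subsection2.1}, the stationary density is strictly positive, hence $c>0$. Substituting $p_{\rm st}'\equiv 0$ into the first-integral relation \eqref{FK} yields
\[
(2\rho - f'(\phi)f(\phi))c + C = 0 \qquad \text{for all } \phi\in[\eta,\zeta],
\]
so $f'(\phi)f(\phi)$ is constant on $[\eta,\zeta]$. Thus $(f^2)'(\phi) = 2f'(\phi)f(\phi)$ is constant, making $f^2$ an affine function on $[\eta,\zeta]$. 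But $f^2$ vanishes at the two distinct endpoints $\eta$ and $\zeta$, so $f^2\equiv 0$ on $[\eta,\zeta]$, and hence $f\equiv 0$ there. This contradicts the fact that $\eta$ and $\zeta$ are \emph{consecutive} zeros of $f$, completing the proof.

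The main obstacle I anticipate is the sign argument in (i): concluding $r'(0)>0$ requires combining the transversality (H2) with the absence of further zeros of $f$ in $(\eta,\zeta)$ to deduce that $f'(\eta)$ and $f'(\zeta)$ have opposite signs. Once that sign is pinned down, the rest of (i) is a textbook IFT application, and (ii) is a short algebraic consequence of \eqref{FK} combined with positivity of $p_{\rm st}$.
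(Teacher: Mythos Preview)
Your proposal is correct and follows essentially the same approach as the paper. For (i), the paper builds $r$ explicitly as $r(\delta)=\zeta-f_2^{-1}\!\circ f_1(\eta+\delta)$ using local monotone inverses near $\eta$ and $\zeta$, which is the inverse-function-theorem version of your Implicit Function Theorem argument; the sign reasoning (constant sign of $f$ on $(\eta,\zeta)$ plus (H2) forces $f'(\eta)$ and $f'(\zeta)$ to have opposite signs) is identical. For (ii), the paper likewise deduces from \eqref{FK} that $f'f$ is constant, but then evaluates at $\phi=\eta$ to see the constant is $0$ (hence $f^2$ is constant and equal to $0$), whereas you observe $f^2$ is affine and vanishes at both endpoints---a cosmetic difference leading to the same contradiction.
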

\begin{proof}
(i) Since $\eta<\zeta$ are two consecutive elements of $\mathcal N(f)$ it follows that either $f(\phi)>0$ for all $\phi\in (\eta,\zeta)$ or $f(\phi)<0$ for all $\phi\in (\eta,\zeta)$. W.l.o.g. we assume that $f(\phi)>0 $  for all $\phi\in (\eta,\zeta)$. Using (H2), we obtain that
\begin{equation}\label{Sign_Eq1}
f'(\eta)>0>f'(\zeta).
\end{equation}
Thus, there exists $\widetilde\delta\in (0,\frac{\zeta-\eta}{2})$ such that the restriction function $f_1:=f|_{(\eta,\eta+\widetilde\delta)}: (\eta,\eta+\widetilde\delta)\rightarrow (f(\eta),f(\eta+\widetilde\delta))$ is strictly increasing and a smooth diffeomorphism. Similarly, there exists $\widehat \delta \in \frac{\zeta-\eta}{2})$  such that the restriction function $f_2:=f|_{(\zeta-\widehat\delta,\zeta)}: (\zeta-\widehat\delta,\zeta)\rightarrow (f(\zeta),f(\zeta-\widehat\delta))$ is strictly decreasing and a smooth diffeomorphism. Shrinking $\widetilde\delta$, if necessary, we can also choose $\widehat\delta$ such that $f(\zeta-\widehat\delta)=f(\eta+\delta)$. Let $\delta^*:=\widetilde\delta$ and define $r:[0,\delta^*)\rightarrow [0,\frac{\zeta-\eta}{2})$ by $r(0)=0$ and
\[
r(\delta):=\zeta-f_2^{-1}\circ f_1(\eta+\delta)\qquad\hbox{for all } \delta\in (0,\delta^*).
\]
Then, $r$ is strictly increasing and smooth function and satisfies \eqref{Requirement1}.

\noindent
(ii) Suppose a contrary that the function $p_{\rm st} (\phi)$ is constant on $[\eta,\zeta]$. This implies that $p^{'}_{\rm st} (\phi)=0$ for $\phi\in [\eta,\zeta]$. Then, by \eqref{FK} the function $f^{'}(\phi)f(\phi)$ is constant on $[\eta,\zeta]$. This together with the fact that $f(\eta)=0$ implies that $f(\phi)=0$ for all $\phi\in [\eta,\zeta]$. This leads to a contradiction and the proof is complete.
\end{proof}
\begin{proof} [Proof of Theorem \ref{MainThm1} for vanishing noise] In such a case, the set $\mathcal N(f)$ has a finite element, i.e.
\[
\mathcal N(f)=\{\phi_0,\phi_1,\dots, \phi_k\},\quad \hbox{where}\quad \phi_0<\phi_1<\dots<\phi_k.
\]
Replacing the function $f(\cdot)$ by $f(\cdot-\phi_0)$, if necessary, we can assume without affecting the proof that $\phi_0=0$ and therefore $\phi_k=2\pi$. Next, by Proposition \ref{Fundamental}, to show $\lambda<0$ it is sufficient to prove the following statement for all $i=0,\dots,k-1$
\begin{equation*}
I_i<0, \qquad \hbox{where } I_i:=\int_{\phi_i}^{\phi_{i+1}} f^{''}(\phi)f(\phi)  p_{\rm st}(\phi)\;d\phi.
\end{equation*}
For this purpose, we choose and fix $i\in\{0,\dots,k-1\}$. Using the integration by parts formula, we obtain
\begin{equation}\label{Eq_01}
I_i=
-\left(
\int_{\phi_i}^{\phi_{i+1}} f^{'}(\phi)^2 p_{\rm st}(\phi)\;d\phi+
\int_{\phi_i}^{\phi_{i+1}} f^{'}(\phi)f(\phi) p_{\rm st}^{'}(\phi)\;d\phi
\right).
\end{equation}
By virtue of Lemma \ref{Lemma2}, there exists $\delta^*\in (0,\frac{\zeta-\eta}{2})$ and a smooth function $r:[0,\delta^*)\rightarrow $ satisfying that $r(0)=0$ and
\begin{equation}\label{Eq_Functionr}
f(\phi_i+\delta)=f(\phi_{i+1}-r(\delta))\qquad\hbox{for all } \delta\in [0,\delta^*).
\end{equation}
By definition of $\phi_i,\phi_{i+1}$ and the set $\mathcal N(f)$, we have $f(\phi)\not=0$ for all $\phi\in (\phi_i,\phi_{i+1})$. Thus, from \eqref{FK} we have for all $\phi\in (\phi_i,\phi_{i+1})$
\begin{equation}\label{FK_02a}
p_{\rm st}^{'}(\phi)= \left(
		\frac{2\rho}{f(\phi)^2}- \frac{f^{'}(\phi)}{f(\phi)}
		\right) p_{\rm st}(\phi)+
		\frac{C}{f(\phi)^2},
\end{equation}
which implies that
\begin{equation*}\label{FK_03a}
f^{'}(\phi)f(\phi) p_{\rm st}^{'}(\phi)
=
2\rho\frac{f^{'}(\phi)}{f(\phi)} p_{\rm st}(\phi)- f^{'}(\phi)^2p_{\rm st}(\phi)+C \frac{f^{'}(\phi)}{f(\phi)}.
\end{equation*}
Consequently, for all $\delta\in (0,\delta^*)$ we have
\begin{eqnarray*}
&& \int_{\phi_i+\delta}^{\phi_{i+1}-r(\delta)} f^{'}(\phi)f(\phi) p_{\rm st}^{'}(\phi)+ f^{'}(\phi)^2 p_{\rm st}(\phi)\;d\phi\\
&=&
2\rho
\int_{\phi_i+\delta}^{\phi_{i+1}-r(\delta)}\frac{f^{'}(\phi)}{f(\phi)}p_{\rm st}(\phi)\;d\phi + C\int_{\phi_i+\delta}^{\phi_{i+1}-r(\delta)} \frac{f^{'}(\phi)}{f(\phi)}\;d\phi.
\end{eqnarray*}
From \eqref{Eq_Functionr}, we derive that
\[
\int_{\phi_i+\delta}^{\phi_{i+1}-r(\delta)} \frac{f^{'}(\phi)}{f(\phi)}\;d\phi=\ln|f(\phi_{i+1}-r(\delta))|-\ln|f(\phi_i+\delta)|=0.
\]
Thus, by \eqref{Eq_01}
\begin{equation}\label{Eq_05}
I_i=-2\rho \lim_{\delta\to 0^+}\int_{\phi_i+\delta}^{\phi_{i+1}-r(\delta)}  \frac{f^{'}(\phi)}{f(\phi)}p_{\rm st}(\phi)\;d\phi.
\end{equation}
Taking the integral of both sides of \eqref{FK_02a} from $\phi_i+\delta$ to $\phi_{i+1}-r(\delta)$ yields that
\begin{eqnarray*}
&& p_{\rm st}(\phi_{i+1}-r(\delta))-p_{\rm st}(\phi_{i}+\delta)
+
\int_{\phi_i+\delta}^{\phi_{i+1}-r(\delta)}  \frac{f^{'}(\phi)}{f(\phi)}p_{\rm st}(\phi)\;d\phi\\
&=&
2\rho\int_{\phi_i+\delta}^{\phi_{i+1}-r(\delta)} \frac{p_{\rm st}(\phi)}{f(\phi)^2}\;d\phi
+
C \int_{\phi_i+\delta}^{\phi_{i+1}-r(\delta)}\frac{1}{f(\phi)^2}\;d\phi.
\end{eqnarray*}
Hence, letting $\delta\to 0$ and using \eqref{Eq_05}, we obtain
\begin{eqnarray*}
&& p_{\rm st}(\phi_{i+1})-p_{\rm st}(\phi_{i})-\frac{I_i}{2\rho}\\
&=&
\lim_{\delta\to 0^+}
\left(2\rho\int_{\phi_i+\delta}^{\phi_{i+1}-r(\delta)} \frac{p_{\rm st}(\phi)}{f(\phi)^2}\;d\phi
+
C \int_{\phi_i+\delta}^{\phi_{i+1}-r(\delta)}\frac{1}{f(\phi)^2}\;d\phi\right).
\end{eqnarray*}
On the other hand, replacing $\phi=\phi_i$ and $\phi=\phi_{i+1}$ in \eqref{FK} yield that
\[
p_{\rm st}(\phi_{i})
=
p_{\rm st}(\phi_{i+1})
=
-\frac{C}{2\rho}.
\]
Consequently,
\begin{equation}\label{Eq_06}
\frac{I_i}{2\rho}
=
-\lim_{\delta\to 0^+}
\left(2\rho\int_{\phi_i+\delta}^{\phi_{i+1}-r(\delta)} \frac{p_{\rm st}(\phi)}{f(\phi)^2}\;d\phi
+
C \int_{\phi_i+\delta}^{\phi_{i+1}-r(\delta)}\frac{1}{f(\phi)^2}\;d\phi\right).
\end{equation}
By \eqref{FK_02a} we have
\[
\frac{p_{\rm st}^{'}(\phi)}{p_{\rm st}(\phi)}
=
\left(
\frac{2\rho}{f(\phi)^2}
-\frac{f^{'}(\phi)}{f(\phi)}
\right)+ \frac{C}{f(\phi)^2 p_{\rm st}(\phi)}\quad\hbox{for } \phi\in (\phi_i,\phi_{i+1}).
\]
Taking the integral of both sides from $\phi_i+\delta$ to $\phi_{i+1}-r(\delta)$ gives that
\begin{eqnarray*}
&& \ln p_{\rm st}(\phi_{i+1}-r(\delta))-\ln p_{\rm st}(\phi_{i}+\delta)\\
&=&
\int_{\phi_i+\delta}^{\phi_{i+1}-r(\delta)} \frac{2\rho}{f(\phi)^2}\;d\phi+ C \int_{\phi_i+\delta}^{\phi_{i+1}-r(\delta)} \frac{1}{f(\phi)^2 p_{\rm st}(\phi)}\;d\phi.
\end{eqnarray*}
Since
\[
\lim_{\delta\to 0^+} \ln p_{\rm st}(\phi_{i+1}-r(\delta))-\ln p_{\rm st}(\phi_{i}+\delta)
=
\ln p_{\rm st}(\phi_{i+1})-\ln p_{\rm st}(\phi_{i})=0
\]
it follows that
\[
\lim_{\delta\to 0^+}
\left(\int_{\phi_i+\delta}^{\phi_{i+1}-r(\delta)} \frac{2\rho}{f(\phi)^2}\;d\phi+ C \int_{\phi_i+\delta}^{\phi_{i+1}-r(\delta)} \frac{1}{f(\phi)^2 p_{\rm st}(\phi)}\;d\phi\right)
=0.
\]
Consequently,
\[
\lim_{\delta\to 0^+}
 \left(C\int_{\phi_i+\delta}^{\phi_{i+1}-r(\delta)} \frac{1}{f(\phi)^2}\;d\phi+ \frac{C^2}{2\rho}\int_{\phi_i+\delta}^{\phi_{i+1}-r(\delta)} \frac{1}{f(\phi)^2 p_{\rm st}(\phi)}\;d\phi\right)
=0.
\]
Adding this equality to both sides of \eqref{Eq_06} implies that
\begin{eqnarray*}
\frac{I_i}{2\rho}
&=&
-\lim_{\delta\to 0^+} \int_{\phi_i+\delta}^{\phi_{i+1}-r(\delta)} \frac{1}{f(\phi)^2}\left( 2\rho p_{\rm st}(\phi)+\frac{C^2}{2\rho p_{\rm st}(\phi)} +2 C\right)\;d\phi\\
&= &
-\lim_{\delta\to 0^+} \int_{\phi_i+\delta}^{\phi_{i+1}-r(\delta)} \frac{1}{f(\phi)^2}\left( \sqrt{2\rho  p_{\rm st}(\phi)} +\frac{C}{ \sqrt{2\rho  p_{\rm st}(\phi)}}\right)^2\;d\phi.
\end{eqnarray*}
Using the fact that the function $r$ is strictly increasing, we obtain that the function
\[
\delta\mapsto \int_{\phi_i+\delta}^{\phi_{i+1}-r(\delta)} \frac{1}{f(\phi)^2}\left( \sqrt{2\rho  p_{\rm st}(\phi)} +\frac{C}{ \sqrt{2\rho  p_{\rm st}(\phi)}}\right)^2\;d\phi
\]
is decreasing. Then, from non-constant property of the function $ p_{\rm st}(\phi)$ on $[\phi_i,\phi_{i+1}]$ (see Lemma \ref{Lemma2}(ii)) we arrive at the conclusion that $\frac{I_i}{2\rho}<0$. The proof is complete.
\end{proof}
%
%
%
%
%
%
%
%
\section*{Acknowledgements}
Authors thank to Dr. H. Nakao (Tokyo Institute of Technology) for a discussion. The initial work of this paper was done when the second author visited Hokkaido University and he thanks the support of Japan Society for the Promotion of Science. The work of Doan Thai Son and Hoang The Tuan is funded by the Vietnam National Foundation for Science and Technology Development (NAFOSTED) under Grant Number
101.03-2017.01.
\end{document}